\numberwithin{equation}{section}
\theoremstyle{definition}
\newtheorem{proposition}{Proposition}
\newtheorem{remark}{Remark}
\newtheorem{theorem}{Theorem}
\title{A Frobenius-Optimal Projection for Enforcing Linear Conservation in Learned Dynamical Models}
\author{
John M.~Mango \\
Department of Mathematics\\
Makerere University\\
Kampala, Uganda\\
\texttt{mango.john@mak.ac.ug}\\
\And
Ronald Katende\\
Department of Mathematics\\
Kabale University\\
Kikungiri Hill, Katuna Road, 317, Kabale, Uganda\\
\texttt{rkatende92@gmail.com}}
\date{}
\begin{document}
\maketitle

\begin{abstract}
We consider the problem of restoring linear conservation laws in data-driven linear dynamical models.  
Given a learned operator $\widehat{A}$ and a full-rank constraint matrix $C$ encoding one or more invariants, we show that the matrix closest to $\widehat{A}$ in the Frobenius norm and satisfying $C^\top A = 0$ is the orthogonal projection $A^\star = \widehat{A} - C(C^\top C)^{-1}C^\top \widehat{A}$. This correction is uniquely defined, low rank and fully determined by the violation $C^\top \widehat{A}$. In the single-invariant case it reduces to a rank-one update. We prove that $A^\star$ enforces exact conservation while minimally perturbing the dynamics, and we verify these properties numerically on a Markov-type example. The projection provides an elementary and general mechanism for embedding exact invariants into any learned linear model.
\end{abstract}

\keywords{Linear dynamical systems \and conservation laws \and Frobenius projection \and matrix corrections \and structure-preserving modelling \and data-driven dynamics} 

\section{Introduction}
\label{sec:problem}

We study a simple post-processing correction for linear dynamical models learned from data that restores exact conservation laws while minimally perturbing the dynamics.  In many applications, linear surrogates are identified from time series by system identification \cite{Ljung1999}, sparse discovery of dynamics \cite{Brunton2016}, dynamic mode decomposition and Koopman-type approximations \cite{Schmid2010}, or by linearising more complex machine-learning models such as physics-informed neural networks \cite{Raissi2019}.  These approaches are standard tools in control, fluid mechanics, climate, epidemiology, networked systems and quantitative finance, where fast linear models are used for prediction, filtering and decision support.

In most of these settings the underlying system obeys one or more linear conservation laws.  Typical examples include conservation of mass or charge in compartmental and network models, conservation of probability in Markov chains, and conservation of integral quantities in finite-volume discretisations of conservation laws \cite{LeVeque2002,Norris1998}.  At the linear-algebra level such invariants are encoded by left null-space relations of the form
\[
c^\top x(t) \equiv \text{const}, \qquad c^\top A = 0,
\]
where $A$ is the true generator and $c$ is a known conservation vector (or matrix for multiple invariants).  Data-driven identification rarely enforces these relations exactly, so the learned operator $\widehat{A}$ typically violates $c^\top \widehat{A} = 0$ even when the underlying physics is conservative.  Existing remedies either build conservative schemes from the outset \cite{LeVeque2002}, add physics-informed losses during training \cite{Raissi2019}, or rely on ad hoc normalisations in Markov and network models \cite{Ljung1999,Norris1998}.  There does not appear to be a general closed-form recipe which, given $\widehat{A}$ and prescribed linear invariants, produces the unique closest conservative matrix in a well-defined norm. To our knowledge, this is the first closed-form, norm-optimal post-processing method that enforces arbitrary linear invariants for any learned linear operator.

We consider linear time-invariant systems
\begin{equation}
	\dot{x}(t) = A x(t), 
	\qquad x(t) \in \mathbb{R}^n,\; A \in \mathbb{R}^{n\times n},
	\label{eq:linsys}
\end{equation}
equipped with one or more linear conservation laws.  A single invariant is specified by a nonzero vector $c \in \mathbb{R}^n$ such that
\begin{equation}
	c^\top x(t) \equiv c^\top x(0)
	\quad \text{for all trajectories of \eqref{eq:linsys}},
	\label{eq:single-invariant}
\end{equation}
which is equivalent to the algebraic constraint
\begin{equation}
	c^\top A = 0^\top.
	\label{eq:single-constraint}
\end{equation}
More generally, let $C \in \mathbb{R}^{n\times m}$, $1 \le m \le n$, have full column rank and encode $m$ independent invariants via
\begin{equation}
	C^\top x(t) \equiv C^\top x(0)
	\quad \Longleftrightarrow \quad
	C^\top A = 0.
	\label{eq:multi-constraint}
\end{equation}
We refer to $C$ as the constraint matrix.

In applications, $A$ is unknown and a data-driven procedure returns a matrix $\widehat{A} \in \mathbb{R}^{n\times n}$ that approximates $A$ but need not satisfy \eqref{eq:multi-constraint}.  The learned dynamics
\[
\dot{x}(t) = \widehat{A} x(t)
\]
may then drift in directions forbidden by the conservation laws.  Given $(\widehat{A},C)$, we seek a matrix $A^\star$ that satisfies $C^\top A^\star = 0$ and is as close as possible to $\widehat{A}$ in a chosen norm.  In this letter we work with the Frobenius norm, which induces the standard Hilbert-space structure on $\mathbb{R}^{n\times n}$ and admits explicit orthogonal projections \cite[Ch.~2]{HornJohnson2013,TrefethenBau1997}.  We therefore pose the conservation-correction problem as
\begin{equation}
	\min_{M \in \mathbb{R}^{n\times n}} \; \| M - \widehat{A} \|_F
	\quad \text{subject to} \quad C^\top M = 0.
	\label{eq:projection-problem}
\end{equation}
The feasible set
\begin{equation}
	\mathcal{S}_C := \{ M \in \mathbb{R}^{n\times n} : C^\top M = 0 \}
	\label{eq:SC}
\end{equation}
is a nonempty linear subspace of $\mathbb{R}^{n\times n}$ (it contains the zero matrix), so \eqref{eq:projection-problem} has a unique minimiser $A^\star$ given by the orthogonal projection of $\widehat{A}$ onto $\mathcal{S}_C$ with respect to the Frobenius inner product.  The next section computes this projection in closed form and characterises its rank and spectral effect.

\subsection*{Notation}

We write $\langle X,Y \rangle_F := \mathrm{trace}(Y^\top X)$ for the Frobenius inner product on $\mathbb{R}^{n\times n}$ and $\|X\|_F := \sqrt{\langle X,X \rangle_F}$ for the associated norm.  The Euclidean norm on $\mathbb{R}^n$ is denoted by $\|\cdot\|_2$.  The identity matrix in $\mathbb{R}^{n\times n}$ is $I_n$.  All constraints $C^\top A = 0$ are understood columnwise, i.e. $C^\top a_j = 0$ for each column $a_j$ of $A$.


\section{Main Result: Frobenius-Optimal Conservation Projection}
\label{sec:projection}

We first treat a single conservation law encoded by a nonzero vector $c \in \mathbb{R}^n$ (so $C=c$ in \eqref{eq:multi-constraint}).  The corresponding correction problem is
\begin{equation}
	\min_{M \in \mathbb{R}^{n\times n}} \; \|M - \widehat{A}\|_F
	\quad \text{subject to} \quad c^\top M = 0^\top.
	\label{eq:rank1-problem}
\end{equation}

\begin{theorem}[Rank-one conservation correction]
	\label{thm:rank1}
	Let $c \in \mathbb{R}^n$ be nonzero and let $\widehat{A} \in \mathbb{R}^{n\times n}$ be arbitrary.  Define
	\begin{equation}
		A^\star
		:=
		\widehat{A}
		-
		\frac{c\,c^\top \widehat{A}}{\|c\|_2^2}.
		\label{eq:Astar-rank1}
	\end{equation}
	Then $A^\star$ is the unique minimiser of \eqref{eq:rank1-problem}.  Moreover,
	\begin{enumerate}[label=(\alph*),leftmargin=*,itemsep=1pt]
		\item $c^\top A^\star = 0^\top$, so $A^\star$ satisfies the conservation constraint;
		\item $A^\star - \widehat{A}$ has rank at most one, with
		\[
		A^\star - \widehat{A}
		=
		-\,\frac{c\,c^\top \widehat{A}}{\|c\|_2^2},
		\]
		and $\operatorname{rank}(A^\star - \widehat{A}) = 1$ if and only if $c^\top \widehat{A} \neq 0^\top$.
	\end{enumerate}
\end{theorem}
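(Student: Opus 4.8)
The plan is to use the fact that the squared Frobenius norm decouples across columns, which reduces the matrix problem \eqref{eq:rank1-problem} to $n$ independent vector least-squares problems on the hyperplane $c^\perp := \{v \in \mathbb{R}^n : c^\top v = 0\}$. Writing $\widehat{A} = [\widehat{a}_1, \dots, \widehat{a}_n]$ and $M = [m_1, \dots, m_n]$ columnwise, the constraint $c^\top M = 0^\top$ is exactly $m_j \in c^\perp$ for each $j$, while $\|M - \widehat{A}\|_F^2 = \sum_{j=1}^n \|m_j - \widehat{a}_j\|_2^2$. So the problem separates, and the optimal $j$-th column is the orthogonal projection of $\widehat{a}_j$ onto the closed subspace $c^\perp$, namely $m_j^\star = \bigl(I_n - cc^\top/\|c\|_2^2\bigr)\widehat{a}_j$; this exists and is unique by the Hilbert-space projection theorem (equivalently, by strict convexity of $v \mapsto \|v - \widehat{a}_j\|_2^2$ restricted to a subspace). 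Reassembling columns gives $M^\star = \bigl(I_n - cc^\top/\|c\|_2^2\bigr)\widehat{A} = \widehat{A} - cc^\top\widehat{A}/\|c\|_2^2 = A^\star$, and uniqueness of each column yields uniqueness of $A^\star$.

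Equivalently—and this is the version I would actually write up—one checks directly that $M \mapsto P_c M$, with the symmetric projector $P_c := I_n - cc^\top/\|c\|_2^2$ onto $c^\perp$, is the Frobenius-orthogonal projection onto $\mathcal{S}_c := \{M : c^\top M = 0\}$. Two verifications suffice: first, $A^\star = P_c\widehat{A} \in \mathcal{S}_c$ since $c^\top P_c = (P_c c)^\top = 0^\top$ because $P_c c = c - c(c^\top c)/\|c\|_2^2 = 0$; second, the residual $R := \widehat{A} - A^\star = cc^\top\widehat{A}/\|c\|_2^2$ is Frobenius-orthogonal to all of $\mathcal{S}_c$, because for any $M$ with $c^\top M = 0^\top$ we have $\langle R, M\rangle_F = \|c\|_2^{-2}\,\mathrm{trace}\!\bigl((M^\top c)(c^\top\widehat{A})\bigr) = 0$ since $M^\top c = 0$. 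By the projection theorem these two facts identify $A^\star$ as the unique minimiser of \eqref{eq:rank1-problem}.

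Claims (a) and (b) then follow by direct computation. For (a), $c^\top A^\star = c^\top\widehat{A} - (c^\top c)(c^\top\widehat{A})/\|c\|_2^2 = c^\top\widehat{A} - c^\top\widehat{A} = 0^\top$, using $c^\top c = \|c\|_2^2$. For (b), rearranging \eqref{eq:Astar-rank1} gives $A^\star - \widehat{A} = -cc^\top\widehat{A}/\|c\|_2^2$, which is $\|c\|_2^{-2}$ times the outer product of the column vector $-c$ with the row vector $c^\top\widehat{A}$, hence of rank at most one; since $c \neq 0$, this outer product is zero iff $c^\top\widehat{A} = 0^\top$, so $\operatorname{rank}(A^\star - \widehat{A}) = 1$ precisely when $c^\top\widehat{A} \neq 0^\top$.

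I do not expect a real obstacle here: the only step needing care is the passage to a Hilbert-space projection and, within it, checking that the residual $R$ is orthogonal to the \emph{entire} feasible subspace rather than merely to $\operatorname{range}(P_c)$—which is exactly where the identity $\mathrm{trace}\bigl((M^\top c)(c^\top\widehat{A})\bigr) = 0$ is used. Everything else is bookkeeping with the rank-one structure of the correction.
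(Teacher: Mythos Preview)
Your proof is correct and follows essentially the same route as the paper: decouple the squared Frobenius norm columnwise, recognise each column problem as orthogonal projection onto $c^\perp$ with projector $P_c = I_n - cc^\top/\|c\|_2^2$, and read off the rank-one structure of the correction. Your second, matrix-level verification---showing $A^\star \in \mathcal{S}_c$ and $\langle \widehat{A}-A^\star, M\rangle_F = \|c\|_2^{-2}\,\mathrm{trace}\bigl((M^\top c)(c^\top\widehat{A})\bigr)=0$ for all $M\in\mathcal{S}_c$---is a clean alternative presentation of the same projection argument, not a different idea.
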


\begin{proof}
	Let $\mathcal{S} := \{v \in \mathbb{R}^n : c^\top v = 0\}$, a closed linear subspace of $\mathbb{R}^n$.  For any $M = [m_1,\dots,m_n] \in \mathbb{R}^{n\times n}$ and $\widehat{A} = [\widehat{a}_1,\dots,\widehat{a}_n]$, the constraint $c^\top M = 0^\top$ is equivalent to $m_j \in \mathcal{S}$ for each column $m_j$, and the Frobenius norm can be written as
	\[
	\|M - \widehat{A}\|_F^2
	=
	\sum_{j=1}^n \|m_j - \widehat{a}_j\|_2^2.
	\]
	Thus \eqref{eq:rank1-problem} decouples into $n$ independent problems
	\begin{equation}
		\min_{m_j \in \mathcal{S}} \; \|m_j - \widehat{a}_j\|_2,
		\qquad j = 1,\dots,n.
		\label{eq:column-problem}
	\end{equation}
	
	Each \eqref{eq:column-problem} is the orthogonal projection of $\widehat{a}_j$ onto $\mathcal{S}$ in the Hilbert space $(\mathbb{R}^n,\|\cdot\|_2)$ and hence has a unique minimiser \cite[Ch.~1]{HornJohnson2013}.  The orthogonal complement of $\mathcal{S}$ is $\mathcal{S}^\perp = \operatorname{span}\{c\}$, so the orthogonal projector $P : \mathbb{R}^n \to \mathcal{S}$ is
	\begin{equation}
		P
		=
		I_n - \frac{c c^\top}{\|c\|_2^2}.
		\label{eq:P-def}
	\end{equation}
	Hence
	\[
	m_j^\star = P \widehat{a}_j
	=
	\widehat{a}_j - \frac{c\,c^\top \widehat{a}_j}{\|c\|_2^2}
	\]
	is the unique minimiser of \eqref{eq:column-problem}.  Collecting the columns gives
	\[
	A^\star
	=
	[m_1^\star,\dots,m_n^\star]
	=
	P \widehat{A}
	=
	\widehat{A} - \frac{c\,c^\top \widehat{A}}{\|c\|_2^2},
	\]
	which is exactly \eqref{eq:Astar-rank1}.  Since each $m_j^\star$ is unique, $A^\star$ is the unique minimiser of \eqref{eq:rank1-problem}.
	
	To verify feasibility, note that $P$ is symmetric and $Pc = 0$, so
	\[
	c^\top A^\star
	=
	c^\top P \widehat{A}
	=
	(P c)^\top \widehat{A}
	=
	0^\top,
	\]
	which proves (a).  For (b),
	\[
	A^\star - \widehat{A}
	=
	-\,\frac{c\,c^\top \widehat{A}}{\|c\|_2^2}
	=
	-\,c \left( \frac{c^\top \widehat{A}}{\|c\|_2^2} \right),
	\]
	an outer product of a column vector and a row vector.  Hence $\operatorname{rank}(A^\star - \widehat{A}) \le 1$.  The rank is zero if and only if $c^\top \widehat{A} = 0^\top$; otherwise it is one.  This proves (b) and completes the proof.
\end{proof}

\begin{remark}
	Theorem~\ref{thm:rank1} depends only on the conservation vector $c$ and the learned matrix $\widehat{A}$, not on the identification method.  Any linear time-invariant model with a single linear invariant can therefore be repaired by a single rank-one update without changing the underlying estimation pipeline.
\end{remark}

\section{Extension to Multiple Conservation Constraints}
\label{sec:multi}

For completeness we state the corresponding result for several invariants.  Let $C \in \mathbb{R}^{n\times m}$ have full column rank and encode $m$ independent conservation laws as in \eqref{eq:multi-constraint}.  The correction problem \eqref{eq:projection-problem} can be written as
\begin{equation}
	\min_{M \in \mathbb{R}^{n\times n}} \; \|M - \widehat{A}\|_F
	\quad \text{subject to} \quad C^\top M = 0.
	\label{eq:rankk-problem}
\end{equation}

\begin{proposition}[Rank-$m$ conservation projection]
	\label{prop:rankk}
	Let $C \in \mathbb{R}^{n\times m}$ have full column rank and let $\widehat{A} \in \mathbb{R}^{n\times n}$ be arbitrary.  Define
	\begin{equation}
		P_C
		:=
		I_n - C\,(C^\top C)^{-1} C^\top,
		\qquad
		A^\star := P_C \widehat{A}.
		\label{eq:PC-def}
	\end{equation}
	Then $A^\star$ is the unique minimiser of \eqref{eq:rankk-problem}, satisfies $C^\top A^\star = 0$, and
	\[
	A^\star - \widehat{A}
	=
	-\,C\,(C^\top C)^{-1} C^\top \widehat{A}
	\]
	has rank at most $m$, with $\operatorname{rank}(A^\star - \widehat{A}) = \operatorname{rank}(C^\top \widehat{A})$.
\end{proposition}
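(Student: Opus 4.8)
The plan is to mimic the proof of Theorem~\ref{thm:rank1}, replacing the rank-one projector with the general orthogonal projector onto the column space of $C$. First I would observe that, exactly as in the single-invariant case, the Frobenius objective decouples columnwise: writing $M = [m_1,\dots,m_n]$ and $\widehat{A} = [\widehat a_1,\dots,\widehat a_n]$, the constraint $C^\top M = 0$ is equivalent to $m_j \in \mathcal{S} := \{v \in \R^n : C^\top v = 0\}$ for each $j$, and $\|M - \widehat A\|_F^2 = \sum_{j=1}^n \|m_j - \widehat a_j\|_2^2$. Hence \eqref{eq:rankk-problem} splits into $n$ independent Euclidean projection problems $\min_{m_j \in \mathcal{S}} \|m_j - \widehat a_j\|_2$, each of which has a unique solution given by the orthogonal projection of $\widehat a_j$ onto the closed subspace $\mathcal{S}$.

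Next I would identify that projector. Since $C$ has full column rank, $\mathcal{S} = (\operatorname{col} C)^\perp$, so $\mathcal{S}^\perp = \operatorname{col} C$ and the orthogonal projector onto $\operatorname{col} C$ is the standard least-squares projector $C(C^\top C)^{-1}C^\top$ (here $C^\top C$ is invertible precisely because of full column rank). Therefore the orthogonal projector onto $\mathcal{S}$ is $P_C = I_n - C(C^\top C)^{-1}C^\top$, which is symmetric and idempotent and satisfies $P_C C = 0$. Collecting the columnwise solutions $m_j^\star = P_C \widehat a_j$ gives $A^\star = P_C \widehat A$, and uniqueness of each $m_j^\star$ yields uniqueness of $A^\star$. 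Feasibility follows from $C^\top A^\star = C^\top P_C \widehat A = (P_C C)^\top \widehat A = 0$, using symmetry of $P_C$.

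For the structural claims, I would write $A^\star - \widehat A = -C(C^\top C)^{-1}C^\top \widehat A$ directly from the definition. This is a product whose left factor $C(C^\top C)^{-1}$ has at most $m$ columns, so $\operatorname{rank}(A^\star - \widehat A) \le m$. For the exact rank, I would argue that $(C^\top C)^{-1}$ is invertible, so multiplying on the left by $C(C^\top C)^{-1}$ — whose null space is trivial since $C$ has full column rank — preserves rank: $\operatorname{rank}\bigl(C(C^\top C)^{-1} C^\top \widehat A\bigr) = \operatorname{rank}(C^\top \widehat A)$. A clean way to see this is that $C(C^\top C)^{-1}$ has a left inverse, namely $C^\top$, since $C^\top C (C^\top C)^{-1} = I_m$; a matrix with a left inverse preserves rank under left multiplication. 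This gives $\operatorname{rank}(A^\star - \widehat A) = \operatorname{rank}(C^\top \widehat A)$.

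No step is genuinely hard — the whole argument is the multi-dimensional analogue of the rank-one proof — but the one place to be careful is the exact-rank claim: one must justify that passing from $C^\top \widehat A$ to $C(C^\top C)^{-1}C^\top \widehat A$ does not drop the rank, and the cleanest justification is the left-inverse observation above rather than an ad hoc dimension count. Everything else (decoupling, the form of the orthogonal projector, feasibility, uniqueness) is routine given Theorem~\ref{thm:rank1}.
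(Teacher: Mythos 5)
Your proof is correct and follows essentially the same route as the paper: columnwise decoupling of the Frobenius objective, identification of $P_C = I_n - C(C^\top C)^{-1}C^\top$ as the orthogonal projector onto $\ker(C^\top)$, and the factorisation $A^\star - \widehat{A} = -C(C^\top C)^{-1}C^\top\widehat{A}$. Your left-inverse justification of $\operatorname{rank}(A^\star - \widehat{A}) = \operatorname{rank}(C^\top\widehat{A})$ is in fact more explicit than the paper's one-line appeal to the full column rank of $C$, and is a welcome tightening of that step.
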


\begin{proof}
	Since $C$ has full column rank, $C^\top C$ is symmetric positive definite and invertible \cite[Thm.~2.5.2]{HornJohnson2013}.  The matrix $P_C$ in \eqref{eq:PC-def} is symmetric and idempotent, hence an orthogonal projector with range $\ker(C^\top)$ \cite[Ch.~2]{HornJohnson2013}.  As in the proof of Theorem~\ref{thm:rank1}, the objective in \eqref{eq:rankk-problem} decomposes columnwise and each column is projected onto $\ker(C^\top)$ by $P_C$.  This yields $A^\star = P_C \widehat{A}$ as the unique minimiser with $C^\top A^\star = 0$.  The rank identity follows from the factorisation of $A^\star - \widehat{A}$ and the full column rank of $C$.
\end{proof}

\begin{remark}
	The same projector $P_C$ can be applied to discrete-time models $x_{k+1} = \widehat{A} x_k$ by replacing $\widehat{A}$ with $P_C \widehat{A}$, which enforces $C^\top x_k$ constant in $k$.  In both continuous- and discrete-time settings, $P_C$ is the canonical orthogonal projection onto the constraint-compatible subspace and can be precomputed once from $(C^\top C)^{-1}$.
\end{remark}


\section{Numerical Illustration}
\label{sec:examples}

We illustrate the projection on a $3\times3$ Markov-type generator.  
The true operator $Q_{\mathrm{true}}$ satisfies $\mathbf{1}^\top Q_{\mathrm{true}}=0^\top$, while the learned estimate $\widehat{A}$ contains small noise and violates conservation.  
Applying the rank-one correction of Section~\ref{sec:projection} with $c=\mathbf{1}$ yields
$A^\star = \widehat{A} - \frac{c\,c^\top\widehat{A}}{\|c\|_2^2}$.
The example shows that conservation is restored exactly, the update is rank one, and the dynamics remain close to those of $\widehat{A}$.

\subsection{Setup}

We construct $\widehat{A}=Q_{\mathrm{true}}+E$ with Gaussian perturbations of variance $5\times10^{-2}$ and compute $A^\star$ from Theorem~\ref{thm:rank1}.  
Table1~\ref{tab:rowsums1}-\ref{tab:rowsums2} reports the row-sum violations and correction statistics. The quantity $c^\top \widehat{A}$ is clearly nonzero, while $c^\top A^\star = 0^\top$ to machine precision. The Frobenius norm and rank of the update agree with the theory, and the corrected spectrum contains the expected zero eigenvalue associated with the invariant while leaving the remaining eigenvalues largely unchanged.

\begin{table*}[h!]
	\centering
	
	\begin{minipage}{0.35\textwidth}
		\centering
		\caption{Row-sum violations before and after projection.  
			Exact conservation requires $c^\top A^\star = 0^\top$.}
		\label{tab:rowsums1}
		\resizebox{\columnwidth}{!}{
			\begin{tabular}{c|rr}
				\hline
				$i$ & $(\widehat{A}\mathbf{1})_i$ & $(A^\star \mathbf{1})_i$ \\ \hline
				1 & $+0.074882$ & $+0.042299$ \\
				2 & $+0.002429$ & $-0.030154$ \\
				3 & $+0.020437$ & $-0.012146$ \\ \hline
		\end{tabular}}
	\end{minipage}
	\hfill
	\begin{minipage}{0.55\textwidth}
		\centering
		\caption{Correction magnitude and spectral properties.}
		\label{tab:rowsums2}
		\resizebox{\columnwidth}{!}{
			\begin{tabular}{l|c}
				\hline
				Quantity & Value \\ \hline
				$\|A^\star - \widehat{A}\|_F$ & $0.364812$ \\
				$\mathrm{rank}(A^\star - \widehat{A})$ & $1$ \\
				Eigenvalues of $\widehat{A}$ & $\{0.0206,-0.8387,-0.6011\}$ \\
				Eigenvalues of $A^\star$     & $\{0,-0.8502,-0.6016\}$ \\ \hline
		\end{tabular}}
	\end{minipage}
	
\end{table*}
These results show that the projection removes precisely the violation $c^\top \widehat{A}$ and leaves the dynamics almost unchanged in all other directions.

\subsection{Trajectory comparison}

We integrate $\dot{x}=\widehat{A}x$ and $\dot{x}=A^\star x$ from $x_0=(0.7,0.2,0.1)^\top$.  
Figure~\ref{fig:plots}(a) shows that $c^\top x(t)$ drifts under $\widehat{A}$ but is preserved exactly under $A^\star$.  
Figure~\ref{fig:plots}(b) shows that the corrected trajectories remain close to those of $\widehat{A}$, confirming that only the invariant direction is altered.

\begin{figure*}[t!]
	\centering
	\begin{minipage}{0.48\textwidth}
		\centering
		\includegraphics[width=\linewidth]{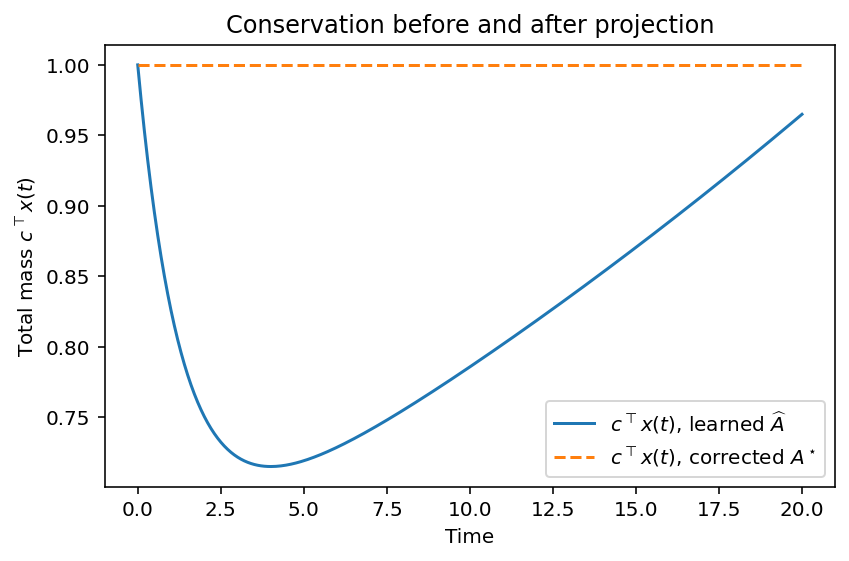}
		\caption*{\textbf{(a)} Conservation before and after projection.  
			The learned operator drifts; the corrected operator preserves $c^\top x(t)$ exactly.}
	\end{minipage}
	\hfill
	\begin{minipage}{0.48\textwidth}
		\centering
		\includegraphics[width=\linewidth]{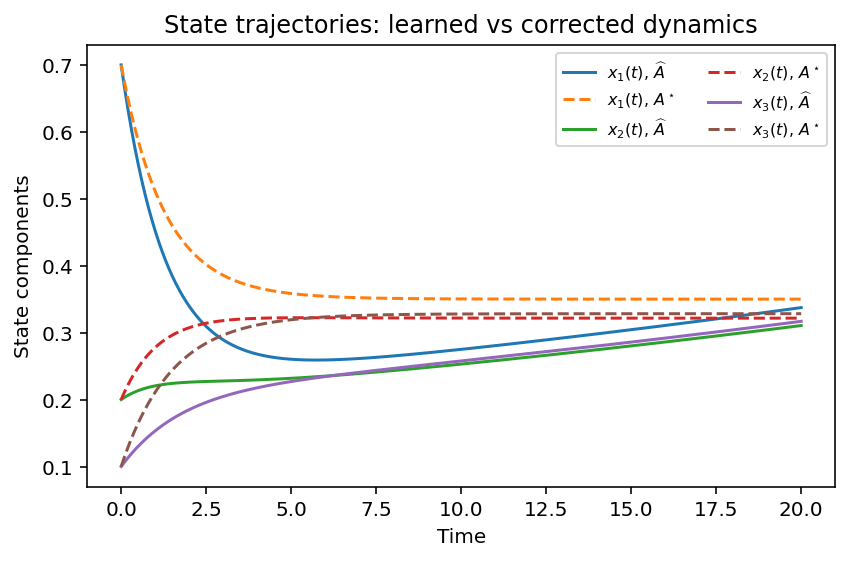}
		\caption*{\textbf{(b)} State trajectories under $\widehat{A}$ and $A^\star$.  
			Conservation is enforced with minimal dynamical distortion.}
	\end{minipage}
	\caption{Comparison of conservation behaviour and state trajectories.  
		The projection enforces exact invariance and remains close to the learned dynamics, as predicted by Theorem~\ref{thm:rank1}.}
	\label{fig:plots}
\end{figure*}

\section{Conclusion}

We derived a closed-form Frobenius projection that restores linear conservation laws in learned linear dynamical models.  
Given a learned operator $\widehat{A}$ and a constraint matrix $C$, the corrected operator $A^\star = P_C \widehat{A}$ is the unique matrix satisfying $C^\top A^\star=0$ and closest to $\widehat{A}$ in the Frobenius norm.  
In the single-invariant case the correction is rank one, alters only the constraint direction, and preserves all remaining structure.  
The numerical example confirms exact conservation, minimal distortion, and the expected spectral behaviour. The projection is computationally lightweight, i.e., the rank-one case costs $O(n^2)$ flops, while the general $m$-constraint projection costs $O(mn^2)$ after a one-time $O(m^3)$ preprocessing of $(C^\top C)^{-1}$. The projection is simple to implement and can be applied to any identification pipeline, providing a principled alternative to ad hoc conservation fixes used in Markov, network and balance-law models.

\bibliographystyle{unsrt} 
\bibliography{refs}

@book{Ljung1999,
	author    = {Lennart Ljung},
	title     = {System Identification: Theory for the User},
	edition   = {2},
	publisher = {Prentice Hall},
	address   = {Upper Saddle River, NJ},
	year      = {1999},
	isbn      = {978-0-13-656695-3}
}

@article{Brunton2016,
	author  = {Steven L. Brunton and Joshua L. Proctor and J. Nathan Kutz},
	title   = {Discovering governing equations from data by sparse identification of nonlinear dynamical systems},
	journal = {Proceedings of the National Academy of Sciences of the USA},
	volume  = {113},
	number  = {15},
	pages   = {3932--3937},
	year    = {2016},
	doi     = {10.1073/pnas.1517384113}
}

@article{Schmid2010,
	author  = {Peter J. Schmid},
	title   = {Dynamic mode decomposition of numerical and experimental data},
	journal = {Journal of Fluid Mechanics},
	volume  = {656},
	pages   = {5--28},
	year    = {2010},
	doi     = {10.1017/S0022112010001217}
}

@article{Raissi2019,
	author  = {Maziar Raissi and Paris Perdikaris and George Em Karniadakis},
	title   = {Physics-informed neural networks: A deep learning framework for solving forward and inverse problems involving nonlinear partial differential equations},
	journal = {Journal of Computational Physics},
	volume  = {378},
	pages   = {686--707},
	year    = {2019},
	doi     = {10.1016/j.jcp.2018.10.045}
}

@book{LeVeque2002,
	author    = {Randall J. LeVeque},
	title     = {Finite Volume Methods for Hyperbolic Problems},
	publisher = {Cambridge University Press},
	address   = {Cambridge},
	year      = {2002},
	isbn      = {978-0-521-81087-6},
	doi       = {10.1017/CBO9780511791253}
}

@book{HornJohnson2013,
	author    = {Roger A. Horn and Charles R. Johnson},
	title     = {Matrix Analysis},
	edition   = {2},
	publisher = {Cambridge University Press},
	address   = {Cambridge},
	year      = {2013},
	isbn      = {978-0-521-83940-2}
}

@book{TrefethenBau1997,
	author    = {Lloyd N. Trefethen and David Bau, III},
	title     = {Numerical Linear Algebra},
	publisher = {Society for Industrial and Applied Mathematics},
	address   = {Philadelphia, PA},
	year      = {1997},
	isbn      = {978-0-89871-361-9},
	doi       = {10.1137/1.9780898719574}
}

@book{Norris1998,
	author    = {J. R. Norris},
	title     = {Markov Chains},
	series    = {Cambridge Series in Statistical and Probabilistic Mathematics},
	volume    = {2},
	publisher = {Cambridge University Press},
	address   = {Cambridge},
	year      = {1998},
	isbn      = {978-0-521-63396-3}
}

\end{document}